\newcommand{\ceil}[1]{\lceil #1 \rceil}
\newcommand{\abs}[1]{\lvert{#1}\rvert}
\newcommand{\Z}{\mathds{Z}}
\newcommand{\R}{\mathds{R}}
\newcommand{\define}{\coloneqq}
\newcommand{\T}{^\top}
\newcommand{\suchthat}{\,:\,}
\newcommand{\F}{\mathcal{F}}
\begin{document}
\title{Objective Coefficient Rounding and Almost Symmetries in Binary Programs}

\author{
Dominik Kuzinowicz\inst{1}\orcidID{0009-0005-1156-106X} \and
Paweł Lichocki\inst{4}\orcidID{0009-0002-4422-0876}\and
Gioni Mexi\inst{1}\orcidID{0000-0003-0964-9802} \and
Marc E. Pfetsch\inst{3}\orcidID{0000-0002-0947-7193} \and
Sebastian Pokutta\inst{1,2}\orcidID{0000-0001-7365-3000} \and
Max Zimmer\inst{1}\orcidID{0009-0007-8683-1030}
}

\authorrunning{D. Kuzinowicz et al.}

\institute{
  Zuse Institute Berlin,
  \email{\{kuzinowicz, mexi, pokutta, zimmer\}@zib.de}
  \and Technische Universität Berlin
  \and
  Google,
  \email{pawell@google.com}  
  \and
  Department of Mathematics, TU Darmstadt,
  \email{pfetsch@mathematik.tu-darmstadt.de}
}

\maketitle              
\begin{abstract}
   This article investigates the interplay of rounding objective coefficients in binary programs and almost symmetries.
   Empirically, reducing the number of significant bits through rounding often leads to instances that are easier to solve.
   One reason can be that the amount of symmetries increases, which enables solvers to be more effective when they are exploited.
   This can signify that the original instance contains `almost symmetries'.
   Furthermore, solving the rounded problems provides approximations to the original objective values.
   We empirically investigate these relations on instances of the capacitated facility location problem, the knapsack problem and a diverse collection of additional instances, using the solvers SCIP and CP-SAT.
   For all investigated problem classes, we show empirically that this yields faster algorithms with guaranteed solution quality. The influence of symmetry depends on the instance type and solver.
   \keywords{symmetry handling \and $\ell$-bit precision}
\end{abstract}
\section{Introduction}

In this article, we investigate the connection between two fundamental concepts in integer programming (IP) and constraint programming (CP): the rounding of the objective coefficients to a prescribed number of bits and almost symmetries.
Both have a significant influence on the performance of solution algorithms.
To explain these concepts, consider a \emph{Binary Program} (BP)
\begin{equation}\label{eq:Prob}
\max\; \{c\T x \suchthat x \in \F\},
\end{equation}
where $\F \subseteq \{0,1\}^n$ and we assume integral $c \in \Z^n$.
Then \emph{rounding} $c$ with respect to a given number of bits~$\ell$ (\emph{$\ell$-bit rounding}) means to set all bits except the top $\ell$ of each of its components to zero.
At one extreme, $\ell = 0$ allows no bits, 
yielding the zero objective and turning the BP into a feasibility problem, which is typically much easier to solve in practice.
The other extreme uses the original number of significant bits $\ell = \ceil{\log_2(\max\{\abs{c_1} + 1, \dots, \abs{c_n} + 1\})}$ and thus leaves the objective unchanged.
Empirically, it can be observed that reducing the number of bits~$\ell$ often leads to easier instances.

The second concept, `almost symmetries', has appeared in the literature under different meanings.
It usually refers to the situation that in a branch-and-bound tree some subproblems at the nodes behave similarly, i.e., almost symmetrical, and thus resources are wasted in order to investigate very similar solutions without gaining relevant new information. In this paper, we propose the following viewpoint: The BP contains \emph{almost symmetries} if the amount of symmetries increases when decreasing the number of bits~$\ell$ in rounding. This also provides a measure for being almost symmetric. Here, a \emph{symmetry} of~\eqref{eq:Prob} is a permutation~$\sigma$ of the variables such that $x \in \F$ if and only if $\sigma(x) \in \F$ and $c\T x = c\T \sigma(x)$.

One `classical' example is the well-known FPTAS (Fully Polynomial Time Approximation Scheme) for the knapsack problem, i.e., the special case of BP with $\F \define \{x \in \{0,1\}^n \suchthat w\T x \leq W\}$ for $w \in \Z_+^n$, $W \in \Z_+$. The basic idea is to round the objective coefficients to a multiple of $\varepsilon > 0$. In this way, the number of different coefficients appearing in a dynamic programming table reduces, which is the main factor determining table sizes and running times. By rounding such that the number of different coefficients become polynomial, one can obtain an approximation of the true optimum with accuracy $\varepsilon$, see, e.g., \cite[Chapter~17]{KorV18}.

\noindent
\textbf{Contributions.}
In this paper, we illustrate the effect of $\ell$-bit rounding on symmetry and solving time using the capacitated facility location and knapsack problem, as well as collected instances from various sources. The performance of the rounded version of each instance is evaluated using two solvers, namely SCIP~\cite{BestuzhevaEtAl23,BolusaniEtal2024OO} and CP-SAT~\cite{ortools_cp} CP-Solver. We demonstrate that indeed with decreasing number of significant bits, i.e., smaller~$\ell$, the problems become easier to solve, while the amount of symmetry increases. We approximate the amount of symmetries by the number of generators of the symmetry group.
Moreover, solving the problem with $\ell$-bit rounding will give an $\frac{1}{2^{\ell-1}}$-approximation of the original problem (Lemma~\ref{lemma:Approximation}).
In this article, we show empirically that depending on the instance type and solver, increasing the amount of symmetries via rounding can help symmetry handling methods to be more successful and allow to reduce the solving time.
All this is possible with black-box solvers by only changing the objective coefficients.

\noindent
\textbf{Literature.}
The concepts of almost symmetries and bit rounding have been studied in the literature.
The term `almost symmetries' has no unique definition, but it has been investigated in several works; see, for example, the proceedings of the workshop ``Almost-Symmetry in Search''~\cite{DonG05}. Moreover, \cite{KnuOP18} investigate when graphs are almost symmetric, which means that they can become symmetric by applying a small number of graph operations.

Bit rounding plays an important role in the investigation of the equivalence of optimization, separation, and augmentation. For example, \cite{schulz19950} and \cite{schulz2002complexity} proved the equivalence of these problems using an iterated rounding scheme called bit-scaling and variants thereof. See~\cite{BPPP2015} for a practical evaluation of these algorithms. 

Interestingly, changing the objective to make optimal solutions unique has also been used to `break symmetries', see e.g., \cite{GhoS11}. This usually requires to increase the number of significant bits and hinders symmetry handling methods.
Moreover, there are many techniques to handle symmetries in IP and CP, see~\cite{Mar10} for an overview, \cite{PfeR19} for a computational comparison, and \cite{BolusaniEtal2024OO} for a presentation of the state-of-the-art techniques used in SCIP.
We note that computing symmetries as defined above is NP-hard, but so-called \emph{formulation symmetries} that leave the linear formulation invariant can be computed using graph automorphism algorithms, see, e.g., \cite{PfeR19}.
\smallskip

\noindent
\textbf{Outline.}
In Section \ref{sec:Basics} we discuss $\ell$-bit rounding, its connection to $\varepsilon$-optimality in optimization and prove a worst-case bound induced by $\ell$-bit rounding. In Section~\ref{sec:experiments} we explain our experimental setup, introduce our problem classes and analyze our results. Finally, in Section \ref{sec:conclusion} we conclude our findings.

\section{$\ell$-bit rounding and $\varepsilon$-optimality}
\label{sec:Basics}

The rounding of the objective $c = (c_1, \ldots, c_n)\T \in \Z^n$ is defined as follows.
Consider the binary representation $c_i = \text{sgn}(c_i) \sum_{j = 0}^{k_i-1} c_{ij}\, 2^j$ of $c_i$, where $\text{sgn}(c_i) \in \{0,\pm 1\}$ is the sign of $c_i$, $k_i = \ceil{\log_2(\abs{c_i} + 1)}$, $c_{ij} \in \{0,1\}$ for all $j = 0, \dots, k_i - 1$.
For $\ell \in \Z_+$, the \emph{$\ell$-bit rounding} of $c_i$ is then defined as $\text{sgn}(c_i) \sum_{j = k_i - \ell}^{k_i - 1} c_{ij}\, 2^j$, that is, setting the lowest $k_i - \ell$ bits to zero and leaving the remaining unchanged.
To quantify the loss in precision that we get by $\ell$-bit rounding, consider the following definition.

\begin{definition}[see \cite{orlin2008}]\label{def:epsilon_optimal}
    A feasible $x^\star$ is $\varepsilon$-optimal for the objective function $c$ if there exists $c' \in \R^n$ such that
    \begin{align*}\label{eq:perturbed_optimality}
    (1-\varepsilon)\, c_j & \leq c_j' \leq (1+\varepsilon)\, c_j && \text{if } c_j \geq 0, \\
    -(1-\varepsilon)\, c_j & \leq c_j' \leq -(1+\varepsilon)\, c_j && \text{if } c_j < 0
    \end{align*}
    for all $j \in [n] \define \{1, \dots, n\}$, and $x^\star$ is optimal for $c'$.
\end{definition}
We prove the following Lemma$\colon$
\begin{lemma}\label{lemma:Approximation}
   Let $x^\star$ be an optimal solution for the objective $c'$ obtained by $\ell$-bit rounding. Then $x^\star$ is $\frac{1}{2^{\ell-1}}$-optimal with respect to the original objective.
\end{lemma}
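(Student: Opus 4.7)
The plan is to take the rounded objective $c'$ itself as the witness vector in Definition~\ref{def:epsilon_optimal}: since $x^\star$ is optimal for $c'$ by assumption, it suffices to check that the componentwise bounds of the definition hold with $\varepsilon = 1/2^{\ell-1}$. Thus the whole argument reduces to estimating the relative error introduced by zeroing the bottom $k_i - \ell$ bits of each $c_i$.

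First I would handle the trivial cases: if $c_i = 0$ or if $k_i \le \ell$, the rounding changes nothing, so $c'_i = c_i$ and the bounds are immediate. For the remaining components, I would write $c_i - c'_i = \mathrm{sgn}(c_i)\sum_{j=0}^{k_i - \ell - 1} c_{ij}\, 2^j$, so that the absolute discarded magnitude is at most $2^{k_i-\ell} - 1$, and in particular $|c_i - c'_i| \le 2^{k_i - \ell}$. Second, I would use the defining property $k_i = \lceil \log_2(|c_i| + 1)\rceil$, which when $k_i \ge 1$ forces the top bit to be $1$ and hence $|c_i| \ge 2^{k_i - 1}$. Dividing the two estimates yields
\[
\frac{|c_i - c'_i|}{|c_i|} \;\le\; \frac{2^{k_i-\ell}}{2^{k_i-1}} \;=\; \frac{1}{2^{\ell-1}} \;=\; \varepsilon.
\]

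It then remains to translate this relative bound into the one-sided inequalities of Definition~\ref{def:epsilon_optimal}. Since the rounding preserves the sign and only drops nonnegative contributions, we always have $\mathrm{sgn}(c_i) = \mathrm{sgn}(c'_i)$ and $|c'_i| \le |c_i|$. For $c_i \ge 0$ this gives $0 \le c'_i \le c_i \le (1+\varepsilon)c_i$, while the lower bound $c'_i \ge (1-\varepsilon)c_i$ follows from $c_i - c'_i \le \varepsilon c_i$ proved above. For $c_i < 0$, a symmetric calculation shows $c'_i$ lies between $c_i$ and $0$, with $|c_i| - |c'_i| \le \varepsilon |c_i|$; interpreting the (evidently transposed) bounds in the definition as requiring $c'_i$ to lie within relative error $\varepsilon$ of $c_i$, this is exactly what we need.

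I do not expect any substantial obstacle: the only nontrivial ingredient is the lower bound $|c_i| \ge 2^{k_i-1}$, which comes directly from the ceiling-log definition of $k_i$, and the rest is bookkeeping with signs. The cleanest exposition is probably to state the bit-by-bit estimate for $|c_i - c'_i|$ and the size estimate for $|c_i|$ as two short displayed calculations, and then close the argument in one line by invoking Definition~\ref{def:epsilon_optimal} with the witness $c' $ equal to the rounded objective.
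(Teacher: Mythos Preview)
Your proposal is correct and follows essentially the same route as the paper: both arguments take $c'$ as the witness in Definition~\ref{def:epsilon_optimal}, bound the discarded part by $2^{k_i-\ell}$, use $|c_i| \ge 2^{k_i-1}$ from the definition of $k_i$, and divide. You are somewhat more explicit about the trivial cases $k_i \le \ell$ and about the sign bookkeeping (and you correctly flag the transposed inequalities in the $c_j<0$ branch of the definition), whereas the paper compresses all of this into one displayed chain of inequalities for the nonnegative case and dismisses the negative case as analogous.
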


\begin{proof}
    Consider $i \in [n]$ with $c_i \geq 0$ and consider its binary representation as defined above.
    Then with $c_i \geq 2^{k_i-1}$, we get
    \begin{align*}
    \big(1+ \tfrac{1}{2^{\ell-1}}\big)\, c_i \geq c_i \geq c_i' = \sum_{j = k_i - \ell}^{k_i - 1} c_{ij}\, 2^j \geq c_i - 2^{k_i-\ell} = c_i - \frac{2^{k_i-1}}{2^{\ell-1}} \geq \big(1- \tfrac{1}{2^{\ell-1}}\big) c_i.
    \end{align*}
    The case of $c_i < 0$ can be shown in a similar way.\qed
\end{proof}

The concept of $\varepsilon$-optimality we use here slightly deviates from the conventional definition, which states that a point $x^\star$ is $\varepsilon$-optimal if $c\T x^\star \geq (1-\varepsilon)\, c\T x$ for all feasible $x$. As noted in \cite{orlin2008}, this traditional definition has several limitations, such as lacking translation invariance with respect to the variables and being not particularly meaningful when the optimal solution value is negative. Nevertheless, when $c \geq 0$, \cite[Lemma 4.1]{orlin2008} shows: If $x^\star$ is an $\varepsilon$-optimal solution with respect to the rounded objective~$c'$ and original objective~$c$ (Definition~\ref{def:epsilon_optimal}), then for all feasible $x \in \F$, the following holds:
\begin{align*}
  c\T x^\star \geq \frac{1}{1+\varepsilon} (c')\T x^\star \geq \frac{1}{1+\varepsilon} (c')\T x \geq \frac{1-\varepsilon}{1+\varepsilon} c\T x.
\end{align*}
Thus, we have $\frac{1-\varepsilon}{1+\varepsilon}$-optimality with respect to the traditional definition.

\section{Computational Experiments}
\label{sec:experiments}

This section evaluates the impact of $\ell$-bit rounding of objective coefficients on symmetry and solver performance. The study considers three problem classes: generated instances of the Capacitated Facility Location Problem, the Knapsack Problem, and a public test set of pseudo-Boolean instances with large objective coefficients.\footnote{Instance and generation scripts as well as all computational results can be found at \url{https://github.com/ZIB-IOL/BitRoundingAlmostSymmetries}.}

All experiments were performed on Intel(R) Xeon(R) Gold 6338 CPUs running at 2.00GHz with a time limit of one hour per instance and 64GB of RAM. SCIP was accessed via the PySCIPOpt package (version~5.1.1), which interfaces with SCIP~9.0.1.
The CP-SAT CP-Solver was accessed through the ortools Python package (version~9.14.6206). Both packages can be installed using \href{https://pypi.org/project/pip}{pip}. 
Obtaining reliable performance results for MIP solvers requires a carefully designed experimental setup, as even minor changes to the algorithm or input data can significantly affect solver behavior and performance. This phenomenon, commonly referred to as performance variability, is well-documented in the MIP literature~\cite{lodi2013performance}. To mitigate this effect, we solve each instance using five random seeds. 

We solve all instances with the original objective as well as the $\ell$-bit rounded version for $\ell=\{2, 3, 4, 5\}$ and compare the $\ell$-bit rounding approach to solving the original problem w.r.t.\ number of symmetry generators (\# Gen.), quality of the solution (\% Obj. Loss), solving time (Time [s]) and number of solved instances (\# Solved); the first three are reported as shifted geometric mean.
The objective loss is defined as $\abs{c\T x_{\ell}^{\star} - c\T x^{\star}} / \abs{c\T x^\star}$ for an optimal solution $x^\star$ of the original problem and an optimal solution $x^\star_\ell$ of the $\ell$-bit-rounded problem. We report \% Obj. Loss only for instances for which both versions are solved to optimality. 
Note that the number of generators is only a rough approximation of the `amount' of symmetry present in the instance, but since many symmetry handling techniques are based on the generators, this is useful information.

\subsection{Capacitated Facility Location Problem}

We consider the well-known \emph{capacitated facility location problems} (CFLP) as one of our test classes. Given $n$ facilities and $m$ customers, the problem is to decide which facilities to open in order to serve each customer while minimizing the total distance between facilities and the customers they serve. Formally, denote by $c_{ij}$ the distance from facility $i$ to customer $j$, $x_{ij} \in \{0,1\}$ the decision variable indicating whether facility $i$ serves customer $j$, $y_i$ the decision variable for opening facility $i$, $f_i$ the fixed cost associated with opening facility $i$, and $u_i$ the capacity of facility $i$, that is, the maximum number of customers it can serve. The problem can then be written as
\begin{align*}
    \min\; &\sum_{i=1}^n\sum_{j=1}^m c_{ij}\, x_{ij} + \sum_{i=1}^n f_i\, y_i \\
    \text{s.t.}\; &\sum_{i=1}^n x_{ij} = 1 && \forall j\in [m], \\
    &\sum_{j=1}^m x_{ij} \le u_i y_i && \forall i \in [n], \\
    &x_{ij} \in \{0, 1\} && \forall i\in [n],\; \forall j\in [m], \\
    &y_i \in \{0,1\} && \forall i\in [n].
\end{align*}

\textbf{Instance generation.}
In order for $\ell$-bit rounding to have an effect, we generated instances in which the customers are close together, such that rounding will make their distances to each facility equal. More precisely, we sample customer positions uniformly from a scaled unit square around the origin and facility positions are uniformly spaced around the scaled unit circle, i.e., at angles $\varphi_j = \frac{2\pi j}{n}$, $j \in [n]$. Capacities are randomly chosen between $\left\lceil\frac{m}{n}\right\rceil$ and $\left\lceil\frac{2m}{n}\right\rceil$ to ensure feasibility, while allowing for more assignment combinations. Per instance we assign each facility the same sampled capacity  $u_i = C$ for all $i \in [n]$. 

We generated 100 instances with $n \in [20, 500]$ facilities and $m \in [100, 2000]$ customers. During generation, we scale the distances $c_{ij}$ such that all coefficients lie in $[0, 1]$. We then apply varying levels of decimal point rounding, with higher levels leading to less symmetric coefficients.
Finally, all coefficients are scaled by $10^6$ to ensure they are large enough. 

\textbf{Results.}\label{sec:results_facloc}
Table~\ref{tab:cflp} presents the results for CFLP instances using $\ell$-bit rounding for both CP-SAT and SCIP, with and without symmetry handling enabled.
For CP-SAT, $\ell$-bit rounding substantially increases the number of symmetry generators, with lower values of $\ell$ generally leading to easier problems.
 Comparing the performance with and without symmetry detection reveals that symmetry handling becomes increasingly beneficial for rounded problems: enabling symmetry detection reduces runtime by \SI{23.5}{\percent} for the original objective, and this speedup grows to \SI{38.1}{\percent} for $\ell=4$ and
  \SI{48.6}{\percent} for $\ell=3$. Notably, for $\ell=2$, even without symmetry detection, the problem becomes significantly easier, achieving a \SI{90}{\percent} runtime reduction compared to the original, though symmetry handling still provides an additional \SI{40.8}{\percent} speedup. Overall, these results demonstrate that the CP solver benefits considerably from the higher
   symmetry induced by rounding, with a maximum loss of only \SI{5.01}{\percent} in solution quality.
   
\begin{table}[tb]
\scriptsize
\centering
\caption{CP-SAT and SCIP with and without symmetry on $5\times100$ CFLP instances.}
\label{tab:cflp}
\begin{tabular*}{\textwidth}{@{\extracolsep{\fill}}l l r r r r r r r@{}}
\toprule
\multirow{2}{*}{Solver} & \multirow{2}{*}{$\ell$-bit} 
& \multicolumn{3}{c}{No Symmetry} 
& \multicolumn{4}{c}{With Symmetry} \\
\cmidrule(){3-5} \cmidrule(){6-9}
& & \% Obj. Loss & Time [s] & \# Solved
&  \# Gen. & \% Obj. Loss & Time [s] & \# Solved \\
\midrule
& original & $\pm0.00$ & 505.71 & 240 & 2.42 & $\pm0.00$ & 386.56 & 254 \\
& $\ell=5$   & 3.41e${-1}$ & 423.38 & 256 & 26.36  & 3.44e${-1}$ & 313.40 & 279 \\
CP-SAT & $\ell=4$   & 7.10e${-1}$ & 410.49 & 258 & 96.55  & 7.29e${-1}$ & 254.22 & 281 \\
& $\ell=3$   & 3.45 & 276.93 & 299 & 209.78 & 3.49 & 142.35 & 317 \\
& $\ell=2$   & 4.90 & 52.69  & 451 & 186.01 & 5.01 & 31.18  & 480 \\
\midrule
& original & $\pm0.00$ & 392.38 & 253 & 2.63   & $\pm0.00$ & 398.38 & 263 \\
 & $\ell=5$   & 4.15e${-1}$ & 346.10 & 255 & 38.99  & 4.14e${-1}$ & 335.40 & 262 \\
SCIP & $\ell=4$   & 8.30e${-1}$ & 339.02 & 261 & 136.75 & 8.21e${-1}$ & 360.57 & 260 \\
& $\ell=3$   & 3.50 & 346.08 & 260 & 217.29 & 3.52 & 398.56 & 263 \\
 & $\ell=2$   & 4.94 & 135.93 & 337 & 343.63 & 4.97 & 171.98 & 347 \\
\bottomrule
\end{tabular*}
\end{table}

For SCIP, the picture is different. While the number of symmetry generators similarly increases for decreasing $\ell$ values, symmetry handling shows mixed effects on performance: enabling symmetry provides marginal benefits or even slowdowns for most roundings, though it does help solve a few additional instances. This indicates that for SCIP, the primary benefit of rounding comes from simplifying the problem structure rather than from symmetry exploitation.

\subsection{Knapsack Problem}

The second problem class is the Knapsack problem. Given $n$ items with values $c_i$ and weights $w_i$, $i \in [n]$, the problem is to choose a subset of items to maximize the sum of their values, while not exceeding a given knapsack capacity $W$.
Formally, the problem is
\begin{align*}
    \max\, \{c\T x \suchthat w\T x \leq W,\; x \in \{0, 1\}^n\}.
\end{align*}

\textbf{Instance generation.}
To generate instances, we distribute the $n$ items into $k$ clusters of size $s_j$, each with some base value $\gamma_j$ and base weight $\omega_j$, $j \in [k]$. We then randomly perturb the value of each item in the cluster, introducing asymmetry in values while maintaining the cluster structure. Specifically, for a fixed cluster $j \in [k]$ with base value $\gamma_j$, the items in that cluster have values $\gamma_j + \mathcal{U}(-\sigma, \sigma)$, where $\mathcal{U}(a,b)$ denotes the \emph{discrete} uniform distribution over $[a, b) \cap \Z$.
The total knapsack capacity is set to $W \define \smash{\frac{1}{2}\sum_{j=1}^k s_j \,\omega_j}$ to ensure a non-trivial solution.

Instances were generated with $n\in[50000, 250000]$ items and $k\in[50, 500]$ clusters with base values $\gamma_j \in \mathcal{U}(2^{10}, 2^{20})$ and noise $\sigma = 2^{12}$. We consider two different ranges for the weights, $\mathcal{U}(50,500)$ and $\mathcal{U}(50,10^5)$. For each combination of these parameters, we created one instance with balanced clusters, i.e., $s_j \approx \frac{n}{k}$, and another with random cluster sizes, such that in both cases $s_1 + \dots + s_k = n$, for a total of 100 instances of the knapsack problem.

\textbf{Results.}\label{sec:results_knapsack}
Table~\ref{tab:aggregated_lbit_scip_knapsack} shows our results for knapsack instances solved with SCIP.
We do not present results with CP-SAT on the knapsack instances, because preliminary experiments indicated that it currently does not perform well on these instances. One contributing factor may be that the large coefficients lead to very large SAT encodings. Experiments using the specialized knapsack solver of OR-Tools are left for future research.

The results follow a similar trend to the CFLP results for CP-SAT. With decreasing $\ell$, we see both a significant improvement in solving time and increase in solved instances. The number of symmetry generators also goes up for all $\ell$ compared to the original. In $13$ of the $100$ Knapsack instances, $\ell$-bit rounding produces more symmetry generators than the original formulation and for the remaining the number is equal. The dip for $\ell=2$ and $\ell=3$ compared to the higher $\ell$ comes from SCIP being able to fix all variables in presolving for one problem leading to no symmetry detection for the two lowest values of $\ell$. 
Turning off symmetry detection only notably influences the original problem formulation with \SI{14.3}{\percent} faster solving time on average and 10 additional solved instances. Analyzing this observation, without symmetry handling more problems are solved by fixing all variables after adding cuts found during the solving process. In comparison, with symmetry handling for the same problems SCIP fixes around \SI{98}{\percent} of the variables in similar time and then spends the remaining time trying to complete the dual proof.
As observed already for CFLP, also for the Knapsack instances SCIP benefits mainly from the simplified problem structure induced by $\ell$-bit rounding instead of increased symmetry.

\begin{table}[tb]
\scriptsize
\centering
\caption{SCIP with and without symmetry on $5\times100$ Knapsack instances.}
\label{tab:aggregated_lbit_scip_knapsack}
\begin{tabular*}{\textwidth}{@{\extracolsep{\fill}}l l r r r r r r r@{}}
\toprule
Solver & $\ell$-bit 
& \multicolumn{3}{c}{No Symmetry} 
& \multicolumn{4}{c}{With Symmetry} \\
\cmidrule(){3-5} \cmidrule(){6-9}
& & \% Obj. Loss & Time [s] & \# Solved
&  \# Gen. & \% Obj. Loss & Time [s] & \# Solved \\
\midrule
& original & $\pm0.00$ & 1280.95 & 160 & 410.34 & $\pm0.00$ & 1495.66 & 150 \\
& $\ell=5$ & 4.80e${-3}$ & 739.73 & 296 & 491.33 & 5.23e${-3}$ & 737.24 & 297 \\
SCIP & $\ell=4$ & 2.55e${-2}$ & 578.21 & 359 & 491.33 & 2.43e${-2}$ & 576.65 & 359 \\
& $\ell=3$   & 8.17e${-2}$ & 388.59 & 399 & 457.33 & 8.08e${-2}$ & 388.49 & 399 \\
 & $\ell=2$   & 2.88e${-1}$ & 203.36 & 449 & 462.39 & 2.88e${-1}$ & 208.17 & 449 \\
\bottomrule
\end{tabular*}
\end{table}

\subsection{Pseudo-Boolean Problems}
Lastly, we apply $\ell$-bit rounding to a test set of pseudo-Boolean (PB) instances. These instances are gathered from the well-established MIPLIB library \cite{miplib2017} translated into PB 0-1 format\footnote{The translated versions of the MIPLIB instances can be found at \url{ https://doi.org/10.5281/zenodo.3870965}} \cite{devriendt2021learn} and PB Competition instances used in the Pseudo-Boolean Competition 2024\footnote{Instances from the PB competition can be found at \url{https://www.cril.univ-artois.fr/PB24/details.html}}.
To ensure that $\ell$-bit rounding has a meaningful effect on objective coefficients, we filtered instances based on the size of their largest coefficients. Specifically, we kept instances whose largest coefficients are at least $2^{10}$. The final set consists of 72 instances from MIPLIB and 98 instances from the PB Competition, for a total of 170 instances.

\textbf{Results.}\label{sec:results_pb}
Table~\ref{tab:combined_lbit_pb} shows aggregated results for our set of PB instances solved with CP-SAT and SCIP, both with and without symmetry handling.
For CP-SAT, while we do not observe a direct effect on solving time, the solver produces a slightly larger number of symmetry generators for lower values of $\ell$, and the loss in solution quality remains low for any level of objective rounding. Furthermore, the total number of solved instances indicates that some problems that were unsolved with the original objective became easier with $\ell$-bit rounded coefficients, particularly for $\ell=2$ and $\ell=3$.
In contrast to CP-SAT on the CFLP instance class, here the effect of enabling symmetry is not as strong and for the $2$-bit rounded problems seemingly detrimental w.r.t. the number of solved instances.
For SCIP, $\ell$-bit rounding improves performance both with and without symmetry, with symmetry handling providing additional benefits on the rounded problems, up to \SI{17,8}{\percent} faster solving for $\ell=2$ while maintaining solution 
quality within \SI{0.76}{\percent} of the original optimum.
While the original problems are slightly faster without symmetry, applying $\ell$-bit rounding seems to allow the solver to utilize symmetries more efficiently and decrease solving time. 
Notably, across all $\ell$ the number of symmetry generators decreases compared to the original problem. This is similar to the behavior we observed for the Knapsack instances, with SCIP being able to fix more variables before symmetry computation, ultimately leading to a lower number of generators.

\begin{table}[tb]
\scriptsize
\centering
\caption{CP-SAT and SCIP with and without symmetry on $5\times170$ PB instances.}
\label{tab:combined_lbit_pb}
\begin{tabular*}{\textwidth}{@{\extracolsep{\fill}}l l r r r r r r r@{}}
\toprule
\multirow{2}{*}{Solver} & \multirow{2}{*}{$\ell$-bit} 
& \multicolumn{3}{c}{No Symmetry} 
& \multicolumn{4}{c}{With Symmetry} \\
\cmidrule(){3-5} \cmidrule(){6-9}
& & \% Obj. Loss & Time [s] & \# Solved
&  \# Gen. & \% Obj. Loss & Time [s] & \# Solved \\
\midrule
& original & $\pm0.00$ & 120.57 & 513 & 3.76 & $\pm0.00$ & 111.51 & 517 \\
& $\ell=5$   & 5.05e$-2$ & 120.37 & 513 & 4.61 & 5.23e$-2$ & 116.05 & 517 \\
CP-SAT & $\ell=4$   & 1.27e$-1$ & 121.28 & 515  & 4.84 & 1.28e$-1$ & 121.56 & 516 \\
& $\ell=3$   & 3.73e$-1$ & 119.55 & 538 & 5.10 & 3.83e$-1$ & 114.06 & 537 \\
& $\ell=2$   & 8.05e$-1$ & 113.03 & 541 & 5.21 & 8.25e$-1$ & 114.68 & 532 \\
\midrule
& original & $\pm0.00$ & 200.91 & 471 & 5.05 & $\pm0.00$ & 210.81 & 470 \\
 & $\ell=5$   & 5.14e$-2$ & 192.88 & 488 & 3.95 & 5.91e$-2$ & 186.23 & 484 \\
SCIP & $\ell=4$   & 1.38e$-1$ & 189.86 & 500 & 4.17 & 1.33e$-1$ & 183.29 & 499 \\
& $\ell=3$   & 3.38e$-1$ & 191.12 & 503 & 4.41 & 3.41e$-1$ & 180.94 & 498 \\
 & $\ell=2$   & 7.75e$-1$ & 177.96 & 499 & 4.63 & 7.63e$-1$ & 173.28 & 503 \\
\bottomrule
\end{tabular*}
\end{table}

\section{Conclusion}
\label{sec:conclusion}

The experiments of this article clearly show a significant speed-up in solving time when decreasing the number of significant bits in the objective function, while the relative error to the original optimal value is small.
Moreover, on the considered instances, we see a clear trend to more symmetry generators indicating almost symmetry.
Often, symmetry handling methods are able to exploit this to boost performance, but this also depends on the type of instance and solver, opening opportunities for future research.

%
%
%
%
\bibliographystyle{splncs04}
\bibliography{roundsym}

\begin{thebibliography}{10}
\providecommand{\url}[1]{\texttt{#1}}
\providecommand{\urlprefix}{URL }
\providecommand{\doi}[1]{https://doi.org/#1}

\bibitem{BestuzhevaEtAl23}
Bestuzheva, K., Besan\c{c}on, M., Chen, W.K., Chmiela, A., Donkiewicz, T., van Doornmalen, J., Eifler, L., Gaul, O., Gamrath, G., Gleixner, A., Gottwald, L., Graczyk, C., Halbig, K., Hoen, A., Hojny, C., van~der Hulst, R., Koch, T., L\"{u}bbecke, M., Maher, S.J., Matter, F., M\"{u}hmer, E., M\"{u}ller, B., Pfetsch, M.E., Rehfeldt, D., Schlein, S., Schl\"{o}sser, F., Serrano, F., Shinano, Y., Sofranac, B., Turner, M., Vigerske, S., Wegscheider, F., Wellner, P., Weninger, D., Witzig, J.: Enabling research through the {SCIP Optimization Suite} 8.0. ACM Trans. Math. Softw.  \textbf{49}(2) (2023). \doi{10.1145/3585516}, article 22

\bibitem{BolusaniEtal2024OO}
Bolusani, S., Besan{\c{c}}on, M., Bestuzheva, K., Chmiela, A., Dion{\'{i}}sio, J., Donkiewicz, T., van Doornmalen, J., Eifler, L., Ghannam, M., Gleixner, A., Graczyk, C., Halbig, K., Hedtke, I., Hoen, A., Hojny, C., van~der Hulst, R., Kamp, D., Koch, T., Kofler, K., Lentz, J., Manns, J., Mexi, G., M\"{u}hmer, E., Pfetsch, M.E., Schl{\"o}sser, F., Serrano, F., Shinano, Y., Turner, M., Vigerske, S., Weninger, D., Xu, L.: {The SCIP Optimization Suite 9.0}. Technical report, Optimization Online (2024), \url{https://optimization-online.org/2024/02/the-scip-optimization-suite-9-0/}

\bibitem{devriendt2021learn}
Devriendt, J., Gleixner, A., Nordstr{\"o}m, J.: Learn to relax: Integrating 0-1 integer linear programming with pseudo-{B}oolean conflict-driven search. Constraints  \textbf{26}(1),  26--55 (2021). \doi{10.1007/s10601-020-09318-x}

\bibitem{DonG05}
Donaldson, A.F., Gregory, P. (eds.): Proceedings of the Almost-Symmetry in Search (SymNet) Workshop, New Lanark, Scotland (2005), technical report: TR-2005-201, Department of Computing Science University of Glasgow, \url{https://www.doc.ic.ac.uk/~afd/homepages/edited_volumes/SymNet2005.pdf}

\bibitem{GhoS11}
Ghoniem, A., Sherali, H.D.: Defeating symmetry in combinatorial optimization via objective perturbations and hierarchical constraints. IIE Transactions  \textbf{43},  575--588 (2011). \doi{10.1080/0740817X.2010.541899}

\bibitem{miplib2017}
Gleixner, A., Hendel, G., Gamrath, G., Achterberg, T., Bastubbe, M., Berthold, T., Christophel, P.M., Jarck, K., Koch, T., Linderoth, J., Lübbecke, M., Mittelmann, H.D., Ozyurt, D., Ralphs, T.K., Salvagnin, D., Shinano, Y.: {MIPLIB 2017: Data-Driven Compilation of the 6th Mixed-Integer Programming Library}. Mathematical Programming Computation  (2021). \doi{10.1007/s12532-020-00194-3}, \url{https://doi.org/10.1007/s12532-020-00194-3}

\bibitem{KnuOP18}
Knueven, B., Ostrowski, J., Pokutta, S.: Detecting almost symmetries of graphs. Math. Prog. Comp.  \textbf{10},  143--185 (2018). \doi{10.1007/s12532-017-0124-3}

\bibitem{KorV18}
Korte, B., Vygen, J.: Combinatorial Optimization. Theory and Algorithms, Algorithms and Combinatorics, vol.~21. Springer, Heidelberg, 6th edn. (2018). \doi{10.1007/978-3-662-56039-6}

\bibitem{BPPP2015}
Le~Bodic, P., Pfetsch, M., Pavelka, J., Pokutta, S.: Solving {MIPs} via scaling-based augmentation. Discrete Optimization  \textbf{27},  1--25 (2018). \doi{doi: 10.1016/j.disopt.2017.08.004}

\bibitem{lodi2013performance}
Lodi, A., Tramontani, A.: Performance variability in mixed-integer programming. In: Theory driven by influential applications, pp. 1--12. INFORMS (2013). \doi{10.1287/educ.2013.0112}

\bibitem{Mar10}
Margot, F.: Symmetry in integer linear programming. In: J{\"u}nger, M., Liebling, T.M., Naddef, D., Nemhauser, G.L., Pulleyblank, W.R., Reinelt, G., Rinaldi, G., Wolsey, L.A. (eds.) 50 Years of Integer Programming. pp. 647--686. Springer (2010)

\bibitem{orlin2008}
Orlin, J.B., Schulz, A.S., Sengupta, S.: $\varepsilon$-optimization schemes and $l$-bit precision: Alternative perspectives for solving combinatorial optimization problems. Discrete Optimization  \textbf{5}(2),  550--561 (2008). \doi{10.1016/j.disopt.2007.08.004}

\bibitem{ortools_cp}
Perron, L., Didier, F.: Google {OR-Tools -- CP-SAT} (2025), \url{https://developers.google.com/optimization/cp/cp_solver}

\bibitem{PfeR19}
Pfetsch, M.E., Rehn, T.: A computational comparison of symmetry handling methods for mixed integer programs. Mathematical Programming Computation  \textbf{11}(1),  37--93 (2019). \doi{10.1007/s12532-018-0140-y}

\bibitem{schulz2002complexity}
Schulz, A.S., Weismantel, R.: The complexity of generic primal algorithms for solving general integer programs. Mathematics of Operations Research  \textbf{27}(4),  681--692 (2002). \doi{10.1287/moor.27.4.681.305}

\bibitem{schulz19950}
Schulz, A.S., Weismantel, R., Ziegler, G.M.: 0/1-integer programming: Optimization and augmentation are equivalent. In: Algorithms -- {ESA} '95, Proceedings. pp. 473--483 (1995). \doi{10.1007/3-540-60313-1_164}

\end{thebibliography}

\end{document}